\def\nd{\noindent}
\def\thend{\rule{3mm}{3mm}}
\def\R{\mathbb{R}}
\def\D{\mathcal{D}}
\def\N{\mathcal{N}}
\def\L{\mathcal{L}}
\newtheorem{claim}{Claim}[section]
\newtheorem{thm}{Theorem}[section]
\newtheorem{prop}{Proposition}[section]
\newtheorem{lem}{Lemma}[section]
\numberwithin{equation}{section}
\begin{document}

\title{Existence of a heteroclinic solution for a double well potential equation in an infinite cylinder of $\mathbb{R}^N$  }
\author{\sf Claudianor O. Alves\thanks{Research of C. O. Alves partially supported by  CNPq 304036/2013-7  and INCTMAT/CNPq/Brazil} 
\\
\small{Universidade Federal de Campina Grande, }\\
\small{Unidade Acad\^emica de Matem\'atica } \\
\small{CEP: 58429-900 - Campina Grande-PB, Brazil}\\
\small{ e-mail: coalves@mat.ufcg.edu}\\
\vspace{1mm}\\
}
\date{}
\maketitle
\begin{abstract}
This paper concerns with the existence of a heteroclinic solution for the following class of elliptic equations 
$$
-\Delta{u}+A(\epsilon x, y)V'(u)=0, \quad \mbox{in} \quad \Omega,
$$  
where $\epsilon >0$, $\Omega=\R \times \D$ is an infinite cylinder of $\mathbb{R}^N$ with $N \geq 2$.  Here, we have considered a large class of potential $V$ that includes the Ginzburg-Landau potential $V(t)=(t^{2}-1)^{2}$ and two geometric conditions on the  function $A$. In the first condition we assume that $A$  is asymptotic at infinity to a periodic function, while in the second one $A$ satisfies 
$$
0<A_0=A(0,y)=\inf_{(x,y) \in \Omega}A(x,y) <  \liminf_{|(x,y)| \to +\infty}A(x,y)=A_\infty<\infty, \quad \forall y \in \D. 
$$ 
\end{abstract}

\vspace{0.5 cm}
\noindent
{\bf \footnotesize 2000 Mathematics Subject Classifications:} {\scriptsize 26A33, 34C37, 35A15, 35B38 }.\\
{\bf \footnotesize Key words}. {\scriptsize Heteroclinic solutions, Variational methods, Double potential, Critical points}


\section{Introduction}

This paper concerns with the existence of a heteroclinic solution for the following class of elliptic equations
\begin{equation} \label{E01}
-\Delta u + A(\epsilon x, y)V'(u)=0,   \quad \mbox{in} \quad \Omega, \tag{PDE}
\end{equation}
together with the Neumann boundary condition
\begin{equation} \label{E02}
\frac{\partial u}{\partial \nu}(x,y)=0, \ x \in \R, \ y \in \partial \D, \tag{NC}
\end{equation}
where $N \geq 2$, $\epsilon>0$, $\Omega$ is an infinite cylinder of the type $\Omega=\R \times \D$  with $\D \subset \R^{N-1}$ being a smooth bounded domain and $\nu=\nu(y)$ is the normal vector outward pointing to $\partial \D.$ Related to the functions $A:\overline{\Omega} \to \mathbb{R}$ and $V:\mathbb{R} \to \mathbb{R}$, we are assuming the following conditions: \\

\noindent {\bf Conditions on $V$:} \\

\noindent $ (V_1) \,\, V \in C^{1}(\mathbb{R}, \mathbb{R})$. \\

\noindent $ (V_2)$ \,\, $V(-1)=V(1)=0$ and $V(t) \geq 0 \quad \mbox{for all} \quad  t \in \mathbb{R}$,  \\

\noindent and \\

\noindent $ (V_3) \,\,\,   V(t)>0$ \,\, for all $ t \not= -1,1.$ \\

An example of  $ V$ satisfying $ (V_1)-(V_3)$ is the Ginzburg-Landau potential $V(t)=(t^{2}-1)^{2}.$

\vspace{0.5cm}

\noindent {\bf Conditions on $A$:} \\

In whole this paper $A$ is a $C^{1}$-function that belongs to one  of the following classes:  \\

\noindent {\bf Class 1:} \, $A$ is  asymptotic at infinity to a periodic function. \\

In this class, we assume that there exists a $C^1$-function $A_p: \overline{\Omega} \to \mathbb{R}$, which is $1-$ periodic in $x$, such that   
$$
|A(x,y)-A_p(x,y)| \to 0 \,\,\, \mbox{as} \,\,\,\ |(x,y)| \to +\infty \leqno{(A_1)}
$$ 
and
$$
0<A_0=\inf_{(x,y) \in \Omega}A(x,y)\leq A(x,y)<A_p(x,y), \,\,\, \forall (x,y) \in \Omega.  \leqno{(A_2)}
$$

This type of condition is well known when we are working with periodic asymptotically problem of the type  
$$
-\Delta u +A(x)u=f(u), \,\,\,\, \mbox{in} \quad \mathbb{R}^{N},
$$
see for example Alves, Carri\~ao and Miyagaki \cite{ACM}, Jianfu and Xiping \cite{JX} and their references.

\vspace{0.5 cm}

\noindent {\bf Class 2:} \, $A$ satisfies the Rabinowitz's condition. \\

In this class of functions, we suppose that 
$$
0<A_0=A(0,y)=\inf_{(x,y) \in \Omega}A(x,y) <  \liminf_{|(x,y)| \to +\infty}A(x,y)=A_\infty<\infty, \quad \forall y \in \D. \leqno{(A_3)}
$$ 

A condition like above has been introduced by Rabinowitz \cite[Theorem 4.33]{R12} to study the existence of solution for a P.D.E. of the type 
$$
-\epsilon^{2} \Delta u +A(x)u=f(u) \,\,\,\, \mbox{in} \quad \mathbb{R}^{N},
$$
where $\epsilon>0$, $f:\mathbb{R} \to \mathbb{R}$ is a continuous function with subcritical growth and $A:\mathbb{R}^N \to \mathbb{R}$ is a continuous function satisfying 
$$
0<\inf_{x \in \mathbb{R}^N}A(x) < \liminf_{|x| \to \infty}A(x).
$$
By using variational methods, more precisely the mountain pass theorem, Rabinowitz has established the existence of solution for $\epsilon$ small enough. For this reason, throughout this article,  we will call $(A_3)$ of Rabinowitz's condition.

By $(V_1)-(V_3),$ $V$ is a double well potential and we are interested in the existence of solutions for (\ref{E01}) and (\ref{E02}) that are heteroclinic in $x$ from 1 to -1. 
A heteroclinic solution from 1 to -1 is a function $u \in C^{2}(\overline{\Omega}, \mathbb{R})$ verifying (\ref{E01})-(\ref{E02})
with
$$
u(x,y) \to 1 \quad \mbox{as} \quad x \to -\infty \quad \mbox{and} \quad  u(x,y) \to -1 \quad \mbox{as} \quad x \to +\infty, \quad \mbox{uniformly in} \quad y \in \D.
$$

In \cite{RabinowitzTokyo}, Rabinowitz  has proved the existence of a heteroclinic solution for elliptic equations of the type 
$$
-\Delta u=g(x,y,u), \quad \mbox{in} \quad \Omega,
$$
together with the boundary condition (NC) and also with the Dirichlet boundary condition, that is, 
\begin{equation}
u(x,y)=0, \quad x \in \mathbb{R}, \,  y \in \partial \D.  \tag{DC}
\end{equation}
In order to prove the existence of heteroclinic solution, in Section 2,  Rabinowitz has used variational methods by supposing on  $g$ the conditions below: \\
\noindent $(g_1) \quad g \in C^{1}(\overline{\Omega} \times \mathbb{R}, \mathbb{R})$. \\
\noindent $(g_2) \quad g(x,y,t)$ is even and 1-periodic in $x$. 

\noindent In Section 3, Rabinowitz has considered some conditions on $g$  that permit to study other classes of nonlinearity. From these comments, we see that if 
$$
g(x,y,t)=A(x,y)V'(t), \eqno{(g)}
$$
Rabinowitz has studied the case when $A(x,y)$ is 1-periodic in $x$, see Section 2 of the paper above mentioned. Here, we continue this study, because we will work with two new classes of function $A$ that were not considered in that paper, more precisely the Classes $1$ and $2$.

After Byeon, Montecchiari and Rabinowitz \cite{Byeon} have established the existence of heteroclinic solution $u:\Omega \to \mathbb{R}^m$ for a large class of elliptic system like 
$$
-\Delta u+V_u(x,u)=0, \quad \mbox{in} \quad \Omega,
$$
together with the boundary condition (NC) by supposing the following conditions on potential $V$: \\
$(H_1)$ \,\, $V \in C^{1}(\overline{\Omega} \times \mathbb{R}^{m}, \mathbb{R})$ and $V(x_1+1,x_2....,x_N,y)=V(x,y),$ i.e., $V$ is 1-periodic in $x_1$. \\
$(H_2)$ \,\,There are points $a^- \not= a^+$ such that $V(x,a^{\pm})=0$  for all $x \in \Omega$ and $V(x,y)>0$ otherwise. \\
$(H_3)$ \,\, There is a constant $\underline{V}>0$ such that $\displaystyle \liminf_{|t| \to \infty}V(x,t) \geq \underline{V}$ uniformly in $x \in \Omega$. \\
$(H_4)$ \,\, For $N \geq 2$, there exist constants $c_1;C_1 > 0$ such that
$$
|V_u(x,t)|\leq c_1+C_1|t|^p, 
$$ 
where $1<p<\frac{N+2}{N-2}$ for $N \geq 3$ and there is no upper growth restriction on $p$ if $N = 2$.  In the present paper, we are working with the potential $V(x,y,u)=A(x,y)V(u)$, with $A$ belonging to Classes $1$ or $2$ and $V$ satisfying $(H_1)-(H_4)$. Our paper also continues the study made in \cite{Byeon} for $m=1$, because we are working with other classes  of function $A$. Here, it is very important to mention that the study of elliptic system as above is very subtle because some arguments used for the scalar case $m=1$ cannot be used for general case $m>1$ as for example maximum principle.

In the literature we also find interesting papers that study the existence of heteroclinic solution for elliptic equations in whole $\mathbb{R}^N$ like
$$
-\Delta u(x,y)+ A(x,y)V'(u(x,y))=0, \quad (x,y) \in \mathbb{R}^N,
$$
by supposing different conditions on $A$ and $V$, see for example,  Alessio and Montecchiari \cite{AM1}, Alessio, Jeanjean and Montecchiari \cite{AlessioJM2}, Alessio, Gui and Montecchiari \cite{AlessioGP},  Rabinowitz \cite{R1}, Rabinowitz and Stredulinsky
\cite{RabStr-0,RabStr-1,RabStr-2} and their references. The reader can find versions for elliptic systems of the above equation in Alama, Bronsard and Gui \cite{Alama}, Alessio, Jeanjean and Montecchiari \cite{AlessioJM2}, Montecchiari and Rabinowitz \cite{MonteRab2016} and references therein.

Motived by papers \cite{Byeon} and \cite{RabinowitzTokyo}, we intend to establish the existence of a heteroclinic solution for the equation  (\ref{E01}) under the Neumann boundary conditions by working with the Classes 1 and 2. As in the above papers, we have used variational method, more precisely minimization technical on a special set, however new ideas have been introduced in the study of the problem, see for example, Proposition \ref{crucial*} in Section 2. The regularity and behavior of the heteroclinic are obtained by using the same arguments found in \cite{Byeon}.  

\vspace{0.5 cm}

Our main results are the following 

\begin{thm} \label{T1} Assume $(V_1)-(V_3)$, $\epsilon =1$ and that $A$ belongs to Class 1. Then problem (\ref{E01})-(\ref{E02}) has a heteroclinic solution from $1$ to $-1$.  
\end{thm}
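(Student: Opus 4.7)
The plan is to obtain the heteroclinic as a minimizer of the natural energy
\[
I(u)=\int_{\Omega}\Bigl(\tfrac12|\nabla u|^{2}+A(x,y)V(u)\Bigr)\,dx\,dy
\]
over the admissible class
\[
\Gamma=\{u\in H^{1}_{\mathrm{loc}}(\Omega):\ I(u)<\infty,\ u(x,\cdot)\to 1 \text{ as } x\to-\infty,\ u(x,\cdot)\to -1 \text{ as } x\to+\infty\}.
\]
In parallel I will consider the periodic functional $I_{p}$ obtained by replacing $A$ by $A_{p}$, with infimum $c_{p}=\inf_{\Gamma}I_{p}$. The periodic/translation-invariant problem is the setting already treated in \cite{RabinowitzTokyo,Byeon}, so I may take $c_{p}$ as attained by a minimizer $u_{p}\in\Gamma$ and regard it as a known reference value.

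First I would exhibit a finite-energy competitor in $\Gamma$ (for instance a smooth monotone profile in $x$ interpolating between $1$ and $-1$ on a bounded interval) so that $c:=\inf_{\Gamma}I<\infty$. The decisive ingredient is then the strict comparison
\[
c<c_{p}.
\]
This comes from $(A_2)$: since $A<A_{p}$ pointwise on $\Omega$, and since $V(u_{p})>0$ on the set where $u_{p}\neq\pm 1$ by $(V_3)$ (which has positive measure for any heteroclinic), translating $u_{p}$ if needed so that its transition region lies in a zone where the strict inequality $A<A_{p}$ actually bites, one gets $I(u_{p})<I_{p}(u_{p})=c_{p}$, whence $c<c_{p}$.

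Next I would take a minimizing sequence $\{u_{n}\}\subset\Gamma$ for $I$. Standard elliptic estimates yield boundedness of $\|u_{n}\|_{H^{1}(\Omega_{R})}$ on every slab $\Omega_{R}=(-R,R)\times\D$, hence, up to subsequence, $u_{n}\rightharpoonup u$ weakly in $H^{1}_{\mathrm{loc}}$ and a.e., and Fatou gives $I(u)\leq c$. The main obstacle, and the technical core of the argument, is to rule out loss of compactness: the transition of $u_{n}$ could drift to $\pm\infty$, or split into pieces that separate. By $(A_1)$, any such escaping piece would, after translation, converge to an $I_{p}$-heteroclinic of cost at least $c_{p}$; combined with the splitting/locality inequalities for $I$, this would force $c\geq c_{p}$, contradicting the strict inequality just proved. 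Making this precise is exactly what I expect Proposition \ref{crucial*} of Section 2 to encode---a quantitative ``transition is captured'' statement producing a uniformly bounded center of transition for $\{u_{n}\}$. This step, where the strict inequality $c<c_{p}$ is leveraged against $(A_1)$ via a concentration--translation dichotomy, is the hardest part of the argument.

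Once the transition is anchored, after at most a bounded translation the weak limit $u\in\Gamma$ satisfies $I(u)=c$. Regularity up to the boundary, the Euler--Lagrange equation (\ref{E01}), the Neumann condition (\ref{E02}), and the uniform limits $u(x,\cdot)\to\pm 1$ as $x\to\mp\infty$ then follow from standard elliptic bootstrap plus the same estimates carried out in \cite{Byeon}, completing the proof.
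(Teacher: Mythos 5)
Your proposal is correct and takes essentially the same route as the paper: establish the strict gap $c<c_p$ from $(A_2)$, take a truncated minimizing sequence, rule out escape of the transition layer via $(A_1)$ and the strict inequality (this is exactly what Proposition~\ref{crucial*} does), and then invoke the Euler--Lagrange, regularity, and asymptotic arguments of \cite{Byeon}. One minor simplification: since $(A_2)$ asserts $A<A_p$ pointwise on all of $\Omega$, no translation of $u_p$ is required---$I(u_p)<I_p(u_p)=c_p$ holds immediately.
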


\begin{thm} \label{T22} Assume $(V_1)-(V_3)$ and that $A$ belongs to Class 2.  Then, there is $\epsilon_0 >0$ such that  problem (\ref{E01})-(\ref{E02}) possesses a heteroclinic solution from $1$ to $-1$ for all $\epsilon \in (0, \epsilon_0).$  
\end{thm}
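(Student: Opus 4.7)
The plan is to apply variational methods---minimizing a natural energy functional on the class of admissible heteroclinic configurations---while exploiting the strict inequality $A_0<A_\infty$ in $(A_3)$ to prevent loss of compactness for $\epsilon$ small. Concretely, consider
$$I_\epsilon(u)=\int_\Omega\left(\tfrac12|\nabla u|^2+A(\epsilon x,y)V(u)\right)\,dx\,dy,$$
and the admissible class $\Gamma=\{u\in H^1_{\mathrm{loc}}(\Omega):I_\epsilon(u)<\infty,\ u\to\pm 1\text{ as }x\to\mp\infty\}$, with $c_\epsilon=\inf_\Gamma I_\epsilon$. Introduce the two frozen-coefficient comparison functionals $I_{A_0}, I_{A_\infty}$ obtained by replacing $A(\epsilon x,y)$ with the constants $A_0, A_\infty$, and let $c_{A_0}, c_{A_\infty}$ denote their infima; the strict inequality $c_{A_0}<c_{A_\infty}$ follows at once from $A_0<A_\infty$, since these constant-coefficient minimizations are translation-invariant and reduce essentially to the one-dimensional Modica-type heteroclinic for the double-well $V$.

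The first key step is to establish $\limsup_{\epsilon\to 0^+}c_\epsilon\le c_{A_0}<c_{A_\infty}$. Given a quasi-minimizer $w$ of $I_{A_0}$ (which may be taken to depend only on $x$), plug $w$ into $I_\epsilon$: since $A(\epsilon x,y)\to A(0,y)=A_0$ pointwise on $\Omega$ as $\epsilon\to 0$ and $A$ is uniformly bounded, dominated convergence yields $I_\epsilon(w)\to I_{A_0}(w)$. Hence $c_\epsilon<c_{A_\infty}$ for all sufficiently small $\epsilon$, which is the analogue in the present setting of Proposition \ref{crucial*}.

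The heart of the proof is attaining $c_\epsilon$. For a minimizing sequence $\{u_n\}\subset\Gamma$, weak lower semicontinuity combined with $H^1_{\mathrm{loc}}$ bounds produces a weak limit $u$ with $I_\epsilon(u)\le c_\epsilon$; the delicate point is showing $u\in\Gamma$, i.e., that the transition layer of $u_n$ does not escape to $x=\pm\infty$. This is the main obstacle: otherwise $\{u_n\}$ splits into a bulk part essentially equal to $\pm 1$ and a layer drifting off to infinity, producing limiting energy at least $c_{A_\infty}$, contradicting the strict inequality $c_\epsilon<c_{A_\infty}$. I would rule out this scenario by normalizing each $u_n$ via an $x$-translation that pins the transition layer at a fixed location (for instance, choosing $x_n$ so that the $y$-averaged value $\int_\D u_n(x_n,y)\,dy$ equals $0$), and then localizing the energy: the contribution of $I_\epsilon(u_n)$ outside a large cylinder $\{|x|\le R\}$ must tend to $0$, since otherwise $I_\epsilon(u_n)\ge c_{A_\infty}-o(1)$.

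Once $u\in\Gamma$ with $I_\epsilon(u)=c_\epsilon$ is produced, the Euler-Lagrange equation shows $u$ is a weak solution of \eqref{E01}-\eqref{E02}, and the arguments of \cite{Byeon} (truncation using $(V_2)$-$(V_3)$ to obtain $-1\le u\le 1$, standard elliptic regularity for the Neumann problem, and a Moser/Harnack-type argument combined with the finiteness of $\int_\Omega A(\epsilon x,y)V(u)\,dx\,dy$) upgrade $u$ to a classical $C^2$ heteroclinic solution with the required uniform asymptotic behavior as $x\to\pm\infty$.
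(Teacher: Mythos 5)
Your overall architecture is the same as the paper's: introduce the limiting functionals with frozen coefficients $A_0$ and $A_\infty$, establish $\limsup_{\epsilon\to 0}c_\epsilon\le c_{A_0}<c_{A_\infty}$, fix $\epsilon_0$ so that $c_\epsilon<c_{A_\infty}$ for $\epsilon\in(0,\epsilon_0)$, and use this strict gap to prevent the transition layer of a minimizing sequence from escaping to infinity. That much matches the paper's Lemma 4.1 and the structure of Proposition \ref{crucial1} exactly.

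The gap is in the sentence ``the contribution of $I_\epsilon(u_n)$ outside a large cylinder $\{|x|\le R\}$ must tend to $0$, since otherwise $I_\epsilon(u_n)\ge c_{A_\infty}-o(1)$.'' This is the entire content of Propositions \ref{crucial*} and \ref{crucial1}, and it is not a one-line implication. If the transition marker $k_1(m)$ runs off to $+\infty$, what one can extract (via Fatou and $(A_3)$) from the translated sequence $Q_m=U_m(\cdot-k_1(m),\cdot)\rightharpoonup W$ is only $J_\infty(W)\le\Theta_\epsilon$; but this alone gives nothing until one shows $W\in\Gamma$, which would force $\Theta_\infty\le J_\infty(W)\le\Theta_\epsilon$ and contradict the gap. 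Showing $W\in\Gamma$ is exactly where the work is: the limit $W$ could, a priori, approach $+1$ at both ends (the layer ``escaping'' past the window), and ruling this out requires the explicit gluing construction of the competitor functions $Z_{j,m}$ (resp.\ $H_j$) — cutting $Q_m$ at level $j$, pasting the constant $1$ on the left, estimating the junction energy via the quantity $\beta(\tau)>0$ of \cite[Proposition 2.14]{Byeon}, and bookkeeping the contributions on $\{x>j+1\}$ against $J(U_m)$ and the tail estimate \eqref{E6B}. Without that cutting-and-pasting argument and the four-case analysis in $(I)$–$(IV)$, the assertion that escaping mass costs at least $c_{A_\infty}$ is unsupported. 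Your proposed translation-normalization $\int_\D u_n(x_n,y)\,dy=0$ also has a subtlety worth flagging: since $I_\epsilon$ is not translation-invariant, $u_n(\cdot+x_n,\cdot)$ is no longer a minimizing sequence for $I_\epsilon$, so the energy comparison must be carried out (as the paper does) against the translated coefficient $A(\epsilon(x-k(m)),y)$, which converges to $A_\infty$ only after passing to the limit $k(m)\to\infty$.

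Two smaller points. First, you attribute the energy comparison $c_\epsilon<c_{A_\infty}$ to ``the analogue of Proposition \ref{crucial*},'' but Proposition \ref{crucial*} (and \ref{crucial1}) is the compactness statement about the weak limit $U$, not the energy comparison; the energy comparison is Lemma 4.1. Second, the strict inequality $c_{A_0}<c_{A_\infty}$ does not need the ``reduction to the one-dimensional Modica profile''; it follows simply because $\Theta_\infty$ is attained by some $W_\infty\in\Gamma$ with $\int_\Omega V(W_\infty)>0$ (a heteroclinic must pass through the set where $V>0$), so $J_{A_0}(W_\infty)<J_{A_\infty}(W_\infty)=\Theta_\infty$ and hence $c_{A_0}\le J_{A_0}(W_\infty)<c_{A_\infty}$. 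This is cleaner and avoids having to justify a symmetry/monotonicity reduction in the cylinder.
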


The plan of the paper is as follows: In Section 2, we prove some technical results, which will be useful to prove the above theorems. In Section 3 we prove the Theorem \ref{T1}, while in Section 4 we prove the Theorem \ref{T22}.

\vspace{0.5 cm}

\section{Preliminary Results}
Consider the problem (\ref{E01})-(\ref{E02}) with $\epsilon =1$, more precisely,

$$
\left\{  \begin{array}{r} -\Delta u +  {A}(x,y)V'(u)=0,\quad  \  \forall \ (x,y)\in \Omega=\R \times \D,\\
\mbox{}\\
\displaystyle \frac{\partial u}{\partial \nu}(x,y)=0, \quad \forall x \in \R,\ y \in \partial \D. \\
\end{array}\right.
$$
	
In the sequel, we define the set   
\begin{eqnarray}\label{Gamma}\Gamma&=& \{U \in W_{loc}^{1,2}(\Omega): |\nabla U| \in L^{2}(\Omega), \,\, \|P_kU -1\|_{L^{2}(\Omega_1)}\to 0 \quad \mbox{as} \\ && k \to -\infty \quad  \mbox{and} \quad \|P_kU +1\|_{L^{2}(\Omega_1)} \to 0 \quad \mbox{as} \quad \quad k \to +\infty \}, \nonumber
\end{eqnarray}
where $\Omega_1=(0,1) \times \D$ and
$$
P_k U (x,y)=U(x+k,y), \quad \mbox{for} \quad (x,y) \in \Omega \quad \mbox{and} \quad  k \in \mathbb{Z}.
$$

It is very important to observe that $\Gamma \not= \emptyset $, because the function $\Phi$ given by 
\begin{equation} \label{phi}
\Phi(x,y)=
\left\{
\begin{array}{lcl}
1, & \mbox{if}&\quad x \leq j, y \in \D, \\\
2j+1-2x, & \mbox{if}&\quad j < x \leq j+1, y \in \D, \\
-1, & \mbox{if}& \quad j+1 < x, y \in \D,
\end{array}
\right.
\end{equation}
belongs to $\Gamma$. Furthermore, we also fix 
$$
\L(u)=\frac{1}{2}|\nabla u|^2 + {A}(x,y)V(u),
$$
and the functionals $J:\Gamma \rightarrow \R \cup \{+\infty\} $ given by
\begin{equation} \label{J}
J(U)=\sum_{k \in \mathbb{Z}} I_k (U) 
\end{equation}
and $I_k: W^{1,2}((k,k+1) \times \D)\rightarrow \R$ defined by
$$
I_k(U)=\int_{k}^{k+1} \int_{\D} \L(U) dx dy.
$$
Associated with functional $J$ we have the number 
\begin{equation}
\label{nivel1}
\Theta^*=\inf\{J(U)\,:\,U \in \Gamma\}.
\end{equation}
By (\ref{phi}), $\Phi \in \Gamma$,  then $\Theta^* <+\infty.$ From definition of $\Theta^*$, there exists a minimizing sequence $(U_n) \subset \Gamma$ for $J$, that is, 
\begin{equation} \label{J}
J(U_n) \to \Theta^* \ \mbox{as}\ n \rightarrow  \infty.
\end{equation}
Without loss of generality, we can assume that $(U_n)$ verifies
\begin{equation} \label{limitacao}
-1 \leq U_n(x,y) \leq 1, \quad \forall (x,y) \in \Omega.
\end{equation}
Indeed, for each $n \in \mathbb{N}$ let us consider 
$$
\tilde{U}_n(x,y)=
\left\{
\begin{array}{rcl}
-1, & \mbox{if} & U_{n}(x,y)\leq -1, \\
U_n(x,y), & \mbox{if} & -1 \leq U_n(x,y) \leq 1, \\
1, & \mbox{if} & U_n(x,y) \geq 1.
\end{array}
\right. 
$$ 
It is easy to check that $\tilde{U}_n \in W^{1,2}_{loc}(\Omega)$ with 
$$
|\tilde{U}_n(x,y)-1|\leq |U_n(x,y)-1|, \quad \forall (x,y) \in \Omega
$$ 
and
$$
|\tilde{U}_n(x,y)+1|\leq |U_n(x,y)+1|, \quad \forall (x,y) \in \Omega.
$$ 
Hence $(\tilde{U}_n) \subset \Gamma$, and so, 
$$
\Theta^* \leq J(\tilde{U}_n), \quad \forall n \in \mathbb{N}. 
$$
Since 
$$
J(\tilde{U}_n) \leq J(U_n), \quad \forall n \in \mathbb{N}, 
$$
it follows that 
$$
\Theta^* \leq J(\tilde{U}_n) \leq J(U_n)=\Theta^*+o_n(1),
$$
thereby showing that $(\tilde{U}_n)$ is also a minimizing sequence for $J$ on $\Gamma$ with
$$
-1 \leq \tilde{U}_n(x,y) \leq 1, \quad \forall (x,y) \in \Omega.
$$ 
From (\ref{J})-(\ref{limitacao}), there is $M>0$ independent of $k$ and $m$ such that
$$
\||\nabla U_m|\|_{L^{2}((k,k+1)\times \D)}+\|U_m\|_{L^{2}((k,k+1)\times \D)}\leq M, \quad \forall m \in \mathbb{N} \quad \mbox{and} \quad k \in \mathbb{Z}.  
$$
Consequently, $(U_n)$ is bounded in $E_k=W^{1,2}((k,k+1)\times \D)$, endowed with the usual norm, for all $k  \in \mathbb{Z}$. Then for some subsequence, there is $U \in W_{loc}^{1,2}(\Omega)$ such that
\begin{equation} \label{EQT1}
U_n \rightharpoonup U \quad \mbox{in} \quad E_k, \quad \forall k \in \mathbb{Z},
\end{equation}
\begin{equation} \label{EQT2*}
	U_n \to U \quad \mbox{in} \quad L^{2}((k,k+1) \times \D), \quad \forall k \in \mathbb{Z},
\end{equation}
and
\begin{equation} \label{EQT2}
U_n(x,y) \to U(x,y), \quad \mbox{a.e. in} \quad \Omega.
\end{equation}
Therefore, from (\ref{J})-(\ref{EQT2}),
\begin{equation} \label{E3.1}
J(U) \leq \Theta^{*} \quad \mbox{and} \quad -1 \leq U(x,y) \leq 1, \quad \mbox{a.e. in } \quad \Omega.
\end{equation}

\vspace{0.5 cm}

In the next section, our main goal is proving that $U$ is the desired heteroclinic solution, and in this point, the conditions on function $A$ play their role. However, before doing that we need to say that if  $A$ is $1$- periodic in $x$, the same arguments explored in \cite{Byeon} guarantee the existence of a heteroclinic solution  $W^*$ from $1$ to $-1$.

\section{Proof of Theorem \ref{T1}: $A$ is  asymptotic at infinity to a periodic function}

By hypothesis,   
	$$
	{A}(x,y) < {A}_p(x,y), \quad \forall (x,y) \in \Omega.
	$$
Then, if $W^* \in \Gamma$ is a heteroclinic solution for the periodic case, we must have   
$$
\Theta^* \leq J(W^*) < J_p(W^*)=\Theta_p^*,
$$
that is,
\begin{equation} \label{niveis}
	\Theta^* < \Theta_p^*.
\end{equation}
The last inequality will be a key point in our approach. In what follows, $(U_m) \subset \Gamma$ is a minimizing sequence for $J$ with 
$$
-1\leq U_m(x,y) \leq 1, \quad \forall (x,y) \in \Omega.
$$

By using the fact that $(U_n) \subset \Gamma$, given $\tau \in (0, \sqrt{|\Omega_1|})$, for each  $m \in \mathbb{N}$, there are $k_1(m), k_2(m) \in \mathbb{Z}$ such that 
\begin{equation} \label{E0}
\|P_{-j}Q_m - 1\|_{L^{2}(\Omega_1)} < \tau, \quad \|Q_m - 1\|_{L^{2}(\Omega_1)} \geq \tau
\end{equation}
and
\begin{equation} \label{E00}
\|P_{j}R_m + 1\|_{L^{2}(\Omega_1)} < \tau, \quad \|R_m + 1\|_{L^{2}(\Omega_1)} \geq \tau
\end{equation}
for all $j \in \mathbb{N}$, where
$$
Q_m(x,y)=U_{m}(x-k_1(m),y) \quad \mbox{and} \quad R_m(x,y)=U_{m}(x+k_2(m),y), \quad \forall (x,y) \in \Omega.
$$

The reader is invited to observe that only one of the following conditions holds:\\

\noindent $(I)$ \,\,\,\,\,\,\,\,\,  $k_1(m), k_2(m)<0$ \quad for some subsequence. \\
\noindent $(II)$ \,\,\,\,\,\, $k_1(m) >0$ for all $m \in \mathbb{N}$, and $k_2(m)<0$ for some subsequence. \\
\noindent $(III)$ \,\,\, $k_2(m) >0$ for all $m \in \mathbb{N}$, and $k_1(m)<0$ for some subsequence. \\
\noindent $(IV)$ \,\,\, $k_1(m),k_2(m) >0$ for all $m \in \mathbb{N}$.  \\

The above conditions are crucial to prove the following proposition

\begin{prop} (Main proposition ) \label{crucial*} There is $j_0 \in \mathbb{N}$ such that 
\begin{equation} \label{NEWLIMIT0}
\|U-1\|_{L^{2}((-j,-j+1) \times \D)}\leq \tau \quad \mbox{and} \quad  \|U+1\|_{L^{2}((j,j+1) \times \D)}\leq \tau, \quad \forall j \geq j_0.
\end{equation}   
\end{prop}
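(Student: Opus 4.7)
The argument is by contradiction, and the central point is the strict inequality $\Theta^* < \Theta_p^*$ from (\ref{niveis}). If the conclusion fails, then combining the local strong $L^{2}$-convergence (\ref{EQT2*}) with the negation of the statement, one finds that the transition region of $U_m$ cannot be uniformly localized; equivalently, along a subsequence, at least one of the sequences $k_1(m)$ or $k_2(m)$ is unbounded in absolute value. The four alternatives $(I)$--$(IV)$ then serve to isolate which of these two integer sequences escapes and in which direction, so that the shift argument below can be applied to the correct one.

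The main step is a shift argument. For concreteness, suppose $|k_1(m)| \to \infty$, and set $Q_m(x,y) = U_m(x - k_1(m), y)$; then $Q_m \in \Gamma$ as well. A change of variables in each $I_k$ yields $J(U_m) = J^{\tilde A_m}(Q_m)$, where $J^{\tilde A_m}$ denotes the same functional with coefficient $\tilde A_m(x,y) := A(x - k_1(m), y)$ in place of $A(x,y)$. Since $k_1(m) \in \mathbb{Z}$ and $A_p$ is $1$-periodic in $x$, we have $A_p(x - k_1(m), y) = A_p(x,y)$, and $(A_1)$ then gives $\tilde A_m \to A_p$ locally uniformly on $\Omega$. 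Along a further subsequence, $Q_m \rightharpoonup Q$ weakly in $W^{1,2}_{loc}(\Omega)$, strongly in $L^{2}$ on bounded sets, and a.e.\ in $\Omega$.

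Passing to the limit in (\ref{E0}) yields $\|P_{-j}Q - 1\|_{L^{2}(\Omega_1)} \leq \tau$ for every $j \in \mathbb{N}$, and the analogous treatment of $R_m$ (whose behavior depends on which of the cases we are in) provides $\|P_{j} Q + 1\|_{L^{2}(\Omega_1)} \to 0$ as $j \to +\infty$, so that $Q \in \Gamma$. Combining the weak lower semicontinuity of the gradient term with Fatou's lemma applied to $\tilde A_m V(Q_m) \geq 0$ (and to the sum over $k \in \mathbb{Z}$),
\begin{equation*}
\Theta_p^* \leq J_p(Q) \leq \liminf_{m\to\infty} J^{\tilde A_m}(Q_m) = \liminf_{m\to\infty} J(U_m) = \Theta^*,
\end{equation*}
contradicting (\ref{niveis}) and proving the proposition.

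The delicate point I expect is verifying that the weak limit $Q$ really lies in $\Gamma$, since membership requires the correct asymptotics at \emph{both} $\pm\infty$, whereas a single shift only directly controls one end. This is precisely where the case analysis $(I)$--$(IV)$ earns its keep: the sign information in each case is used to identify the correct companion shift (via $k_2(m)$, or via the interplay between $k_1(m)$ and $k_2(m)$) so that both ends of $Q$ can be pinned down simultaneously. In particular, in cases like $(IV)$ where both $k_i(m)$ are positive, one must be careful that shifting by $k_1(m)$ does not spoil the information coming from $R_m$; in such cases a second, compatible shift or a separate argument is required. Once $Q \in \Gamma$ is established in each case, the energy inequality above closes the argument.
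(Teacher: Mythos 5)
Your overall strategy---shifting by $k_1(m)$, passing to a weak limit, using Fatou's lemma together with $(A_1)$ to obtain a bound of the form $J_p(\text{limit})\leq\Theta^*$, and comparing with $\Theta_p^*$ to contradict (\ref{niveis})---coincides with the skeleton of the paper's argument, which is organized there as the proof that the sequence $(k_1(m))$ is bounded (Claim \ref{NOVACLAIM}), after which the proposition follows by taking $k_1(m)\equiv k_*$ along a subsequence. The genuine gap is the step ``the analogous treatment of $R_m$ provides $\|P_j Q + 1\|_{L^2(\Omega_1)}\to 0$, so that $Q\in\Gamma$.'' This is not a passing-to-the-limit step, and the repair you gesture at---a ``compatible'' second shift via $k_2(m)$---does not work. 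After shifting by $k_1(m)\to+\infty$, the front transition controlled by $k_2(m)$ sits at $k_1(m)+k_2(m)$ in the new frame; this sequence may itself escape to $+\infty$, in which case the weak limit $W$ can relax back to the well $1$ as $x\to+\infty$ (a bump rather than a heteroclinic), and no second shift can exclude this. The energy bound $J_p(W)\leq\Theta^*$ only gives $I_{p,j}(W)\to 0$ and hence that $P_j W$ accumulates on one of the two constants $\pm1$; deciding \emph{which} one is exactly the missing content.

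The paper fills this gap with a cutting-and-pasting construction, flagged in the introduction as the new idea behind Proposition \ref{crucial*}. Assuming for contradiction that $P_jW\to 1$, i.e.\ $\widehat{W}_0=1$, one isolates a block around some $j_1$ on which $Q_m$ deviates from $1$ by a fixed amount, producing a positive energy quantum $\beta>0$ as in (\ref{BETA}). One then builds the competitor $Z_{j,m}\in\Gamma$ which equals $1$ for $x\leq j$, equals $Q_m$ for $x\geq j+1$, and interpolates linearly on $(j,j+1)\times\D$. The estimates (\ref{E6B})--(\ref{NOVAE})---comparing the $J_p$-tail with the $J$-tail via $(A_1)$--$(A_2)$, and using $\|W_j-1\|_{L^2(\Omega_1)}\to 0$ together with the uniform energy bound to make the interpolation term $I_{p,j}(Z_{j,m})$ smaller than $\beta/2$ for suitable $j\geq j_0(\delta)$ and $m$---combined with (\ref{Beta0}) give $\Theta_p^*\leq I_{p,j}(Z_{j,m})+\Theta^*-\beta/2<\Theta^*$, contradicting (\ref{niveis}). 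This forces $\widehat{W}_0=-1$, hence $W\in\Gamma$, and the resulting inequality $\Theta_p^*\leq J_p(W)\leq\Theta^*$ then furnishes the final contradiction that proves Claim \ref{NOVACLAIM}. Without this construction your argument does not establish $Q\in\Gamma$, and the concluding energy comparison is unjustified.
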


We will assume for a moment that Proposition \ref{crucial*} is proved and show Theorem \ref{T1}.

\vspace{0.5 cm}

\noindent {\bf Proof of Theorem \ref{T1}} 
From the limit $J(U_n) \to \Theta^*$, we get
\begin{equation} \label{theta2}
\sum_{j \in \mathbb{Z}} I_j (U)=J(U) \leq \Theta^*, 
\end{equation}
from where it follows that
$$
I_j (U) \to 0 \quad \mbox{as} \quad j \to -\infty,
$$
or equivalently
$$
\int_{0}^{1}\int_{\D}|\nabla P_{-j}U|^{2}\,dxdy+\int_{0}^{1}\int_{\D}A(x,y)V(P_{-j}U)\,dxdy \to 0 \quad \mbox{as} \quad j \to -\infty.
$$
As $U \in L^{\infty}(\Omega)$, we have that $(P_{-j}U)$ is a bounded sequence in $W^{1,2}(\Omega_1)$. Thus, there is a subsequence $(P_{-j_k}U)$ of $(P_{-j}U)$ and $\hat{U} \in W^{1,2}(\Omega_1)$ such that  
$$
P_{-j_k}U \rightharpoonup \hat{U} \quad \mbox{in} \quad W^{1,2}(\Omega_1) \quad \mbox{as} \quad j_k \to +\infty
$$
$$
P_{-j_k}U \to  \hat{U} \quad \mbox{in} \quad L^{2}(\Omega_1) \quad \mbox{as} \quad j_k \to +\infty
$$
and
$$
P_{-j_k}U(x,y) \to \hat{U}(x,y) \quad \mbox{a.e. in} \quad \Omega_1, \quad \mbox{as} \quad j_k \to +\infty.
$$
From this,  
$$
\int_{0}^{1}\int_{\D}V(\hat{U})\,dxdy=0,
$$
then 
$$
\hat{U}=1 \quad \mbox{or} \quad \hat{U}=-1, 
$$
and so,
$$
P_{-j_k}U \to 1 \quad \mbox{or} \quad P_{-j_k}U \to -1 \quad \mbox{in} \quad L^{2}(\Omega_1) \quad \mbox{as} \quad j_k \to +\infty.
$$
Since $\tau \in (0, \sqrt{|\Omega_1|})$, these limits combine with (\ref{NEWLIMIT0}) to give   
$$
P_{-j_k}U \to 1 \quad \mbox{in} \quad L^{2}(\Omega_1) \quad \mbox{as} \quad j_k \to +\infty.
$$
The above argument also yields
$$
P_{-j}U \to 1 \quad \mbox{in} \quad L^{2}(\Omega_1) \quad \mbox{as} \quad j \to +\infty.
$$
Similar reasoning proves
$$
P_{j}U \to -1 \quad \mbox{in} \quad L^{2}(\Omega_1) \quad \mbox{as} \quad j \to +\infty.
$$
Consequently, $U \in \Gamma$ and $-1 \leq U(x,y) \leq 1$ for all $(x,y) \in \overline{\Omega}$. Moreover,  by (\ref{theta2}), 
$$
J(U)=\Theta^*.
$$
Now, we claim that for each $\phi \in C_{0}^{\infty}(\overline{\Omega})$, we have $\frac{\partial J}{\partial \phi}(U)=0$, where $\frac{\partial J}{\partial \phi}(U)$  denotes the  directional derivative of $J$ at $U$ in the direction of $\phi \in C_{0}^{\infty}(\overline{\Omega})$, where  
$$
C_{0}^{\infty}(\overline{\Omega})=\{\phi:\overline{\Omega}\to \mathbb{R}\,:\; \exists \psi \in C_{0}^{\infty}(\mathbb{R}^{N},\mathbb{R}) \,\, \mbox{such that} \,\, \psi(x)=\phi(x), \,\,\, \forall x \in \overline{\Omega} \}.
$$

Indeed, taking $w=U + t\phi$ with $\phi \in C_{0}^{\infty}(\overline{\Omega})$ and $t \in \mathbb{R}$, we derive that for $k$ large  enough, let's say, $|k| > \ell_0,$ we have
$$
I_k (U+ t\phi)=I_k (U),\ \forall |k| > \ell_0.
$$
Thereby
$$
\frac{J(U + t\phi)-J(U)}{t}= \frac{1}{t}(\sum_{k \in \mathbb{Z}} (I_k (U+ t\phi)-I_k (U))=\sum_{k= -\ell_0}^{\ell_0} \left(\frac{I_k (U+ t\phi)-I_k (U)}{t}\right),
$$
and so,  
$$
\frac{\partial J}{\partial \phi}(U)=\lim_{t \to 0}\frac{J(U + t\phi)-J(U)}{t}=\sum_{k= -\ell_0}^{\ell_0} I'_k(U)\phi.
$$
As $w \in \Gamma$ and $J(U) \leq J(w)$, a standard argument ensures that $\frac{\partial J}{\partial \phi}(U)=0,$ for all $\phi \in C_{0}^{\infty}(\overline{\Omega})$. Therefore, 
$$
\int_{\Omega}\nabla U \nabla \phi\, dx + \int_{\Omega}A(x,y)V(U)\phi \,dx=0, \quad \forall \phi \in C_{0}^{\infty}(\overline{\Omega}).
$$
From this, $U$ is a weak solution of (PDE). A regularity argument from \cite[Section 6]{Byeon} implies that $U \in C^{2}(\overline{\Omega}, \mathbb{R})$, and that $U$ is a classical solution of  
$$
-\Delta U + A(x,y)V'(U)=0, \quad \mbox{in} \quad \Omega \quad  \mbox{and} \quad \frac{\partial U}{\partial \nu}=0, \ x \in \R, \ y \in \partial \D,
$$
with
$$
U(x,y) \to 1 \quad \mbox{as} \quad x \to -\infty \quad \mbox{and} \quad  U(x,y) \to -1 \quad \mbox{as} \quad x \to +\infty, \quad \mbox{uniformly in} \quad y \in \D.
$$
From this, $U$ is a heteroclinic solution from 1 to -1, which finishes the proof of Theorem \ref{T1}. $\Box$

\vspace{0.5 cm}

\noindent {\bf Proof of Proposition \ref{crucial*} } Note that if $(I)$ holds, then 
$$
\|U_m-1\|_{L^{2}((-j-1,-j) \times \D)} \leq \tau \quad \mbox{and} \quad  \|U_m+1\|_{L^{2}((j,j+1) \times \D)} \leq \tau, \quad \forall j,m \in \mathbb{N}. 
$$
This together with (\ref{EQT2*}) yield
$$
\|U-1\|_{L^{2}((-j-1,-j) \times \D)} \leq \tau \quad \mbox{and} \quad  \|U+1\|_{L^{2}((j,j+1) \times \D)} \leq \tau, \quad \forall j \in \mathbb{N},
$$
and the proposition follows with $j_0=0$.

Now, we will prove the proposition by supposing that $(II)$ holds. To begin with, we make the following claim  
\begin{claim} \label{NOVACLAIM}
 $(k_1(m))$ is bounded.
\end{claim}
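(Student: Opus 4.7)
My plan is to argue by contradiction: suppose that, along some subsequence, $k_1(m) \to +\infty$. The idea is to show that the ``departure'' of $U_m$ from $+1$ near $x = -k_1(m)$ carries a uniformly positive amount of energy that escapes to $-\infty$ in the weak limit, in contradiction with $J(U_m) \to \Theta^*$.

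By (\ref{EQT1})--(\ref{E3.1}), I can extract a further subsequence with $U_m \rightharpoonup U$ in $W^{1,2}_{loc}(\Omega)$ and $U_m \to U$ in $L^2_{loc}(\Omega)$, satisfying $-1 \leq U \leq 1$ and $J(U) \leq \Theta^{*}$ by weak lower semicontinuity and Fatou. To capture the bump, I define $\phi_m(x,y) := U_m(x - k_1(m), y)$ on $(-1, 1)\times \D$. Since $J(U_m)$ is uniformly bounded, $(\phi_m)$ is bounded in $W^{1,2}((-1,1)\times \D)$; passing to a sub-subsequence I obtain $\phi_m \to \phi$ in $L^2((-1,1)\times \D)$ and $\phi_m \rightharpoonup \phi$ in $W^{1,2}$. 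The conditions
$\|\phi_m - 1\|_{L^{2}((-1,0)\times \D)} < \tau$ and $\|\phi_m - 1\|_{L^{2}((0,1)\times \D)} \geq \tau$,
inherited from the definition of $k_1(m)$, pass to the limit, so $\phi \not\equiv 1$ on $(0,1)\times \D$. By $(V_1)$--$(V_3)$ and $A \geq A_0 > 0$, the ``bump energy''
$$
c_0 \;:=\; \int_{-1}^{1}\int_{\D}\Bigl(\tfrac12|\nabla \phi|^{2} + A\,V(\phi)\Bigr)\,dxdy \;>\; 0,
$$
and by weak lower semicontinuity the $\mathcal{L}$-integral of $U_m$ over $(-k_1(m)-1,-k_1(m)+1)\times \D$ is at least $c_0 + o_m(1)$.

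Splitting $J(U_m) = \int_{(-R,R)\times \D}\mathcal{L}(U_m) + \int_{\Omega \setminus ((-R,R)\times \D)}\mathcal{L}(U_m)$ for a fixed large $R$, noting that the bump region lies outside $(-R,R)\times \D$ once $k_1(m) > R+1$, and letting $m \to \infty$ and then $R \to \infty$, I obtain
$$
\Theta^{*} \;=\; \lim_{m \to \infty} J(U_m) \;\geq\; J(U) + c_0.
$$
If $U \in \Gamma$, then $J(U) \geq \Theta^{*}$ and the above gives $\Theta^{*} \geq \Theta^{*} + c_0 > \Theta^{*}$, the desired contradiction.

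The principal obstacle is verifying $U \in \Gamma$, in particular that $\|P_k U - 1\|_{L^{2}(\Omega_1)} \to 0$ as $k \to -\infty$. Case $(II)$'s hypothesis $k_2(m)<0$ immediately yields $U = -1$ on $(0,\infty)$ via the uniform $L^2$-closeness of $U_m$ to $-1$ there. But since $k_1(m) \to \infty$, the $L^2$-closeness of $U_m$ to $+1$ on $(-\infty,-k_1(m))\times \D$ provides no uniform control on any fixed bounded negative interval, so the behavior of $U$ at $-\infty$ is delicate. The argument must exploit that the minimizing property of $U_m$, together with the strict inequality $\Theta^{*} < \Theta_p^{*}$ from (\ref{niveis}), forces the main transition of $U_m$ to remain localized in a bounded region near the origin where $A < A_p$ (otherwise a shift-and-periodicity argument on $J_p$ shows $J(U_m)$ is asymptotically at least $\Theta_p^{*}$, a contradiction), so that the weak limit $U$ still satisfies $U \to 1$ far to the left. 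Making this localization rigorous is the heart of the argument.
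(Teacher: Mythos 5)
Your energy-splitting observation is sound as far as it goes: the translated bumps $\phi_m$ do carry a uniformly positive energy $c_0>0$ that escapes to $-\infty$, and one does obtain $\Theta^{*}\geq J(U)+c_0$. The difficulty is that the contradiction you propose requires $U\in\Gamma$, which you cannot assume at this stage: $U\in\Gamma$ is the downstream goal, established only via Proposition~\ref{crucial*}, which in turn depends on this very claim, so the argument is circular. Worse, in the scenario $k_1(m)\to+\infty$ it is perfectly possible that the entire transition of $U_m$ drifts off to $-\infty$; then $U\equiv-1$, $U\notin\Gamma$, $J(U)=0$, and the inequality $\Theta^{*}\geq c_0$ produces no contradiction whatsoever. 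You correctly sense that the strict inequality $\Theta^{*}<\Theta_p^{*}$ from (\ref{niveis}) must forbid this escape, but the sentence you devote to that point is not a loose end to ``make rigorous'' later --- it \emph{is} the content of the claim, and it is left unproved.

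The paper avoids this circularity by working with the recentered sequence $Q_m(x,y)=U_m(x-k_1(m),y)$ and its weak limit $W$, rather than with $U$. Since $k_1(m)\to+\infty$, hypothesis $(A_1)$ makes the shifted coefficient $A(\,\cdot-k_1(m),\cdot\,)$ converge to the periodic $A_p$, and Fatou's lemma then gives $J_p(W)\leq\Theta^{*}$. The argument shows $W\in\Gamma$: $P_{-j}W\to1$ follows from $I_{p,-j}(W)\to0$ and the $\tau$-constraint (\ref{E0}); the harder limit $P_{j}W\to-1$ is obtained by a surgery step --- assuming $\widehat{W}_0=1$, one splices the constant $1$ onto $Q_m$ across a far slab $j$ to build a competitor $Z_{j,m}\in\Gamma$, and a quantitative estimate based on the bump constant $\beta$ and the cut-off error leads to $\Theta_p^{*}<\Theta^{*}$, contradicting (\ref{niveis}). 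Once $W\in\Gamma$ is established, the chain $\Theta_p^{*}\leq J_p(W)\leq\Theta^{*}$ closes the claim without ever invoking membership of $U$ in $\Gamma$ or quantifying an escaping excess. Your proposal has the right driving inequality in view, but it must be run on the recentered limit $W$, and the surgery construction you defer is where the actual work lies; as written, the proof has a genuine gap.
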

In what follows, let us denote $(k_1(m))$ by $(k(m))$. Assume by contradiction that there is a subsequence of $(k(m))$, still denoted by itself, with $k(m) \to +\infty$. The boundedness of $(U_m)$ in $E_k$, implies that $(Q_m)$ is also bounded in 	$E_k$. Thus, for some subsequence, there is $W \in W_{loc}^{1,2}(\Omega)$ such that
\begin{equation} \label{E1}
Q_m \rightharpoonup W \quad \mbox{in} \quad E_k, \quad \forall k \in \mathbb{Z},
\end{equation}
\begin{equation} \label{E1'}
Q_m \to W  \mbox{in} \quad L^{2}((k,k+1) \times \D), \quad \forall k \in \mathbb{Z},
\end{equation}

\begin{equation} \label{E2}
Q_m(x,y) \to W(x,y), \quad \mbox{a.e. in} \quad \Omega,
\end{equation}
and
\begin{equation} \label{E2'}
-1 \leq  W(x,y) \leq 1, \quad \mbox{a.e. in} \quad \Omega.
\end{equation}

A simples change of variables gives us
\begin{equation} \label{M1}
\sum_{k \in \mathbb{Z}} \tilde{I}_k (Q_m)= J(U_m)=\Theta^{*}+o_m(1)\leq  \Theta^{*}+1
\end{equation}
where 
$$
\tilde{I}_k(U)=\int_{k}^{k+1} \int_{\D} \tilde{\L}_m(U) dx dy
$$
and
$$
\tilde{\L}_m(u)=\frac{1}{2}|\nabla u|^2 + A(x-k(m),y)V(u).
$$
Consequently, the Fatou's Lemma together with $(A_1)$ and (\ref{E1})-(\ref{M1}) provides 
\begin{equation} \label{E3}
J_{p}(W) \leq \Theta^{*},
\end{equation}	
which gives
\begin{equation} \label{EZZZ3}
I_{p,-j}(W) \to 0 \quad \mbox{as} \quad j \to +\infty.
\end{equation}
Setting for each $j \in \mathbb{N}$ the function $\widetilde{W}_j=P_{-j}W$, the fact that  $W \in L^{\infty}(\Omega)$ together with the Sobolev embeddings guarantee the existence of $W_0 \in L^{2}(\Omega_1)$,  and a subsequence of $(\widetilde{W}_j)$, still denoted by itself,  such that 
$$
\widetilde{W}_j \to  W_0 \quad \mbox{in} \quad L^{2}(\Omega_1),
$$
that is,
\begin{equation} \label{E5}
\|\widetilde{W}_j - W_0\|_{L^{2}(\Omega_1)} \to 0.
\end{equation}
This limit and (\ref{E0}) lead to
$$
\|W_0 - 1\|_{L^{2}(\Omega_1)} \leq \tau.
$$
On the other hand, by (\ref{EZZZ3}),
$$
I_{p,0}(W_0)=0, 
$$
from where it follows that $W_0=1$ or $W_0=-1$. 
As $ \tau \in (0,\sqrt{|\Omega_1|})$, we must have $W_0=1$. Thereby, 
\begin{equation} \label{E50}
\|\widetilde{W}_j - 1\|_{L^{2}(\Omega_1)} \to 0 \quad \mbox{as} \quad j \to +\infty.
\end{equation}

Now, fixing $W_j=P_jW$ for $j \in \mathbb{N}$, the same reasoning works to show that there exists $\widehat{W}_0 \in L^{2}(\Omega_1)$  and a subsequence of $(W_j)$, still denoted by itself,  such that 
\begin{equation} \label{W1}
W_j \to \widehat{W}_0 \,\, \mbox{in} \,\,  L^{2}(\Omega_1) \quad \mbox{as} \quad j \to \infty,
\end{equation}
or equivalently,
\begin{equation} \label{E5}
\|W_j - \widehat{W}_0\|_{L^{2}(\Omega_1)} \to 0.
\end{equation}
This information gathering with the limit 
$$
I_{p,j}(W) \to 0 \quad \mbox{as} \quad j \to +\infty
$$
leads to $\widehat{W}_0=1$ or $\widehat{W}_0=-1$. Next we are going to show that $\widehat{W}_0=-1$. To see why, assume by contradiction that $\widehat{W}_0=1$. From  (\ref{E0}) and (\ref{W1}), there is $j_1 \in \mathbb{N}$ such that
$$
\|W-1\|_{L^{2}((j_1-1,j_1) \times \D)}  \geq \tau \quad \mbox{and} \quad \|W-1\|_{L^{2}((j_1,j_1+1) \times \D)}  \leq \tau.
$$
As $Q_m \to W$ in $L^{2}((j_1-1,j_1+1) \times \D)$, there is $m_0 \in \mathbb{N}$ satisfying  
$$
\|Q_m-1\|_{L^{2}((j_1-1,j_1) \times \D)}  \geq \tau/2 \quad \mbox{and} \quad \|Q_m-1\|_{L^{2}((j_1,j_1+1) \times \D)}  \leq 2\tau, \quad \forall m \geq m_0.
$$
In what follows, we denoted by $\beta=\beta(\tau)$ the real number given by 
$$
\frac{\beta}{\tilde{A}_0}=\inf_{u \in \N_{\tau}}I_{*,\tau}(u), 
$$
where $\tilde{A}_0=\min\{1,A_0\}$,  
$$
\N_{\tau}=\{u \in W^{1,2}((-1,1) \times \D)\,:\, \|u-1\|_{L^{2}((-1,0) \times \D)}  \geq \tau/2 \quad \mbox{and} \quad \|u-1\|_{L^{2}((0,1) \times \D)}  \leq 2\tau \}
$$
and $I_{*,\tau}: W^{1,2}((-1,1) \times \D)\rightarrow \R$ is defined by
$$
I_{*,\tau}(u)=\int_{-1}^{1} \int_{\D}(|\nabla u|^{2}+V(u))\,\,  dx dy.
$$
Hence, by a simple change of variable
\begin{equation} \label{BETA}
\int_{j_1-1}^{j_1+1} \int_{\D} (|\nabla Q_m|^{2}+V(Q_m))\,\,  dxdy \geq \frac{\beta}{\tilde{A}_0}, \quad \forall m \geq m_0.
\end{equation}
Here we would like point out that the same arguments found in \cite[Proposition 2.14]{Byeon} work to show that $\beta>0$.  Having this in mind, we can assume without loss of generality that
\begin{equation} \label{Beta0}
J(U_m) \leq \Theta^* + \beta/4, \quad \forall m \geq m_0.
\end{equation}
In the sequel, for each $j \geq j_1+2$ and $m \geq m_0$, let us consider the function
$$
Z_{j,m}(x,y)=
\left\{
\begin{array}{lcl}
1, & \mbox{if}&\quad x \leq j, y \in \D, \\\
((j+1)-x)+(x-j)Q_m(x,y), & \mbox{if}&\quad j < x \leq j+1, y \in \D, \\
Q_m(x,y), & \mbox{if}& \quad j+1 < x, y \in \D.
\end{array}
\right.
$$
By a direct computation, we see that $Z_{j,m} \in \Gamma$ and 
$$
J_p(Z_{j,m})=I_{p,j}(Z_{j,m})+\sum_{k=j+1}^{\infty}I_{p,k}(Q_m)=I_{p,j}(Z_{j,m})+\sum_{k=j+1+k(m)}^{\infty}I_{p,k}(U_m),
$$
and so,
$$
\Theta_{p}^{*} \leq J_p(Z_{j,m})= I_{p,j}(Z_{j,m})+\sum_{k=j+1+k(m)}^{\infty}I_{p,k}(U_m).
$$
As $A$ verifies $(A_1)-(A_2)$ and $(J(U_n))$ is bounded, increasing  $m_0$ if necessary, we have 
\begin{equation} \label{E6B}
\sum_{k=j+1+k(m)}^{\infty}I_{p,k}(U_m) \leq \sum_{k=j+1+k(m)}^{\infty}I_{k}(U_m) + \beta/4, \quad \forall m \geq m_0.
\end{equation}
Now, as $j \geq j_1+2$, (\ref{E6B})  implies in the inequality  
$$
\Theta_{p}^{*} \leq I_{p,j}(Z_{j,m})+ J(U_m) - \tilde{A}_0 \int_{j_1-1}^{j_1+1} \int_{\D} (|\nabla Q_m|^{2}+V(Q_m))\,\,dxdy + \beta/4,  
$$
which combine with (\ref{BETA})-(\ref{Beta0}) to give  
\begin{equation} \label{E6}
\Theta_{p}^{*} \leq I_{p,j}(Z_{j,m}) + \Theta^* +\frac{\beta}{4} -\tilde{A}_0 \frac{\beta}{\tilde{A}_0}+\frac{\beta}{4}=I_{p,j}(Z_{j,m}) + \Theta^*-  \frac{\beta}{2}.
\end{equation}
Since
$$
-1 \leq W_j(x,y) \leq 1 \,\,\, \mbox{and} \,\,\,\, W_j \to 1 \,\, \mbox{in} \,\ W^{1,2}({\Omega_1}) \quad \mbox{as} \quad j \to +\infty, 
$$
it is easy to check that
$$
\lim_{j \to +\infty}\int_{0}^{1}\int_{\D}A(x+j,y)V((-x+1+xW_j)\,dxdy=0 
$$
and
$$
\lim_{j \to +\infty}\int_{0}^{1}\int_{\D}|1-W_j|^{2}\,dxdy=0.
$$
Thus, given $\delta >0$, there is  $j_0=j_0(\delta)>j_1+2$, which is independent of $m$, such that
\begin{equation} \label{NOVAESTIMATIVA}
\int_{0}^{1}\int_{\D}A(x+j,y)V(-x+1+xW_j)\,dxdy < \delta, \,\,\,\,\,\, \forall j \geq j_0
\end{equation}
and
\begin{equation} \label{NOVAESTIMATIVA2}
\int_{0}^{1}\int_{\D}|1-W_j|^{2}\,dxdy< \delta, \,\,\,\,\,\, \forall j \geq j_0.
\end{equation}

To continue, we further claim there is $j=j(m) \geq j_0$ and $m \geq m_0$ such that  
\begin{equation} \label{NOVAE}
	I_{p,j}(Z_{j,m})=\int_{j}^{j+1} \int_{\D} \L_p(Z_{j,m}) dx dy < \beta/2.
\end{equation}
If the claim does not hold, for each $j \geq j_0$, there exists $m_1=m_1(j) \geq m_0$ verifying 
$$
\int_{j}^{j+1} \int_{\D} \L_p(Z_{j,m}) dx dy  \geq \beta/2, \quad \forall m \geq m_1.
$$
From definition of $Z_{j,m}$ and $(A_2)$,
$$
\int_{j}^{j+1}|\nabla Z_{j,m}|^{2}\,dxdx \geq \beta/2 - \int_{j}^{j+1} \int_{\D}A(x,y)V((j+1)-x +(x-j)Q_m)\,dxdy.
$$
Recalling that
$$
\lim_{m \to +\infty}\int_{j}^{j+1} \int_{\D}A(x,y)V((j+1)-x +(x-j)Q_m)\,dxdy =\int_{0}^{1}\int_{\D}A(x+j,y)V((-x+1+xW_j)\,dxdy < \delta,
$$
for $j \geq j_0$ and $\delta < \beta/ 4$, there exists $m_2=m_2(j) \geq m_1(j)$ such that 
$$
\int_{j}^{j+1}\int_{\D}|\nabla Z_{j,m}|^{2}\,dxdx \geq \beta/4, \quad \forall m \geq m_2.
$$
Using again the definition of $Z_{j,m}$, there is a constant $C>0$ such that 
$$
\int_{j}^{j+1}\int_{\D}|\nabla Z_{j,m}|^{2}\,dxdy  \leq C\left(\int_{0}^{1}\int_{\D}|1-P_j(Q_m)|^{2}\,dxdy + \int_{j}^{j+1}\int_{\D}|\nabla Q_m|^{2}\,dxdy \right).
$$ 
Now, fixing  $\delta < \frac{\beta}{8C}$ in  (\ref{NOVAESTIMATIVA2}), we obtain
$$
\int_{j}^{j+1}\int_{\D}|\nabla Q_m|^{2}\,dxdy \geq \beta/8, \quad \forall m \geq m_2(j).
$$
Let $l \in \mathbb{N}$ such that
$$
(l+1)\beta/8 > \Theta^{*}+1
$$
and fix $m>\max\{m_2(j): j_0 \leq j \leq j_0+l\}$. Then, 
$$
\sum_{k \in \mathbb{Z}} \tilde{I}_k (Q_m) \geq \Theta^{*}+1,
$$
which contradicts (\ref{M1}), thereby showing (\ref{NOVAE}). Thus, by (\ref{E6}) and (\ref{NOVAE}),
$$
\Theta_{p}^{*} < \Theta^{*},
$$
contrary to (\ref{niveis}), and this ensures that $\widehat{W}_0=-1$. From the above study, we deduce that $W \in \Gamma$, then by  (\ref{E3}), 
$$
\Theta_{p}^{*} \leq J(W) \leq \Theta^{*},
$$
which is absurd. This proves the Claim \ref{NOVACLAIM}. Hence, there is a subsequence $(k_1(m))$, still denoted by itself, and $k_* \in \mathbb{N}$ such that
$k_1(m)=k_*$ for all $m$ in $\mathbb{N}$. Hence, the inequality below
$$
\|P_{-j}Q_m - 1\|_{L^{2}(\Omega_1)} < \tau, \quad \forall m \in \mathbb{N} \quad \mbox{and} \quad j \in \mathbb{N}
$$
is equivalent to
$$
\|U_m - 1\|_{L^{2}((j-1,j) \times \D)} < \tau, \quad \forall m \in \mathbb{N} \quad \mbox{and} \quad \forall j \leq -k_*. 
$$
This inequality combined with (\ref{EQT2*}) gives
$$
\|U - 1\|_{L^{2}((j-1,j) \times \D)} \leq \tau, \quad \forall j \leq -k_* .
$$
Therefore, when $(II)$  occurs, the Proposition  \ref{crucial*} holds with $j_0=k_*$.  The cases $(III)$ and $(IV)$ can be analyzed of the same way, then we omit their proofs, and the proposition is proved. $\Box$

\section{Proof of Theorem \ref{T22}: $A$ verifies the Rabinowitz's condition}

In this section we establish the existence of a heteroclinic solution for the Class 2. In what follows, we are considering the equation  
\begin{equation} \label{E12}
-\Delta u + A(\epsilon x, y)V'(u)=0, \quad \mbox{in} \quad \Omega, \tag{$P_\epsilon$}
\end{equation}
together with the Neumann  boundary condition
\begin{equation} \label{E13}
\frac{\partial u}{\partial \nu}(x,y)=0, \ x \in \R,  y \in \partial \D, \tag{NC}
\end{equation}
where $\epsilon$ is a positive parameter and $A$ satisfies 
$$
0<A_0=A(0,y)=\inf_{(x,y) \in \Omega}A(x,y)\leq \liminf_{|(x,y)| \to +\infty}A(x,y)=A_\infty<\infty, \quad \forall y \in \D.  \leqno{(A_3)}
$$
From now on, we are denoting by  $J_\epsilon, J_\infty:\Gamma \rightarrow \R \cup \{+\infty\}$ the functionals
$$
	J_\epsilon(U)=\sum_{k \in \mathbb{Z}} I_{\epsilon,k} (U)  
$$
and 
$$
	J_\infty(U)=\sum_{k \in \mathbb{Z}} I_{\infty,k} (U), 
$$
where $I_{\epsilon,k},I_{\infty,k}: E_k \rightarrow \R$ are given by
$$
I_{\epsilon, k}(U)=\int_{k}^{k+1} \int_{\D} \left( |\nabla U|^{2}+A(\epsilon x, y)V(U) \right)\, dx dy
$$
and
$$
I_{\infty,k}(U)=\int_{k}^{k+1} \int_{\D} \left( |\nabla U|^{2}+A_\infty V(U) \right)\, dx dy.
$$
Moreover, we denote by $\Theta_{\epsilon}$ and $\Theta_\infty$ the following numbers
$$
\Theta_\epsilon=\inf\{J_\epsilon(U)\,:\,U \in \Gamma\}	
$$	
and
$$
\Theta_\infty=\inf\{J_\infty(U)\,:\,U \in \Gamma\}.	
$$	
	
By Section 2,  we know that there are $W_0, W_\infty \in \Gamma$ verifying $J_0(W_0)=\Theta_0$ and $ J_\infty(W_\infty)=\Theta_\infty$. This fact permit us to prove the following lemma

\begin{lem} $\displaystyle \limsup_{\epsilon \to 0}\Theta_{\epsilon} \leq \Theta_0$ and $\Theta_0 < \Theta_\infty$.
\end{lem}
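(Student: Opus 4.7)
The plan is to use the explicit minimizers $W_0$ and $W_\infty$ of $J_0$ and $J_\infty$ supplied by Section 2 (both satisfying $-1\le W_0, W_\infty \le 1$ pointwise) as comparison functions for $J_\epsilon$ and $J_0$ respectively.

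For $\limsup_{\epsilon\to 0}\Theta_\epsilon \le \Theta_0$, I would plug $W_0 \in \Gamma$ directly into $J_\epsilon$. Since the kinetic term is $\epsilon$-independent,
\begin{equation*}
\Theta_\epsilon \le J_\epsilon(W_0) = J_0(W_0) + \int_\Omega \bigl[A(\epsilon x,y) - A_0\bigr]\,V(W_0)\,dx\,dy.
\end{equation*}
The integrand tends to $0$ pointwise because $A(0,y) = A_0$ by $(A_3)$ and $A \in C^1$. To apply dominated convergence I note that $|W_0|\le 1$ makes $V(W_0)$ bounded, while $A_0 \int_\Omega V(W_0)\,dx\,dy \le J_0(W_0) = \Theta_0 < \infty$ gives $V(W_0) \in L^1(\Omega)$; combining with the boundedness of $A$ on $\overline\Omega$ produces the integrable envelope $C\,V(W_0)$, and passing to the limit yields $J_\epsilon(W_0) \to \Theta_0$.

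For the strict inequality $\Theta_0 < \Theta_\infty$, I would test $J_0$ against $W_\infty$, obtaining
\begin{equation*}
\Theta_0 \le J_0(W_\infty) = J_\infty(W_\infty) + (A_0 - A_\infty)\int_\Omega V(W_\infty)\,dx\,dy.
\end{equation*}
Since $A_0 < A_\infty$ by $(A_3)$, the matter reduces to proving $\int_\Omega V(W_\infty)\,dx\,dy > 0$. If this integral vanished, then $(V_2)$ and $(V_3)$ would force $W_\infty \in \{-1,+1\}$ a.e.\ on $\Omega$. Combined with $W_\infty \in W^{1,2}_{loc}(\Omega)$, $|\nabla W_\infty|\in L^2(\Omega)$, and the connectedness of $\Omega$, a standard chain-rule argument implies $W_\infty$ is a.e.\ constant, contradicting the asymptotic conditions $\|P_k W_\infty - 1\|_{L^2(\Omega_1)}\to 0$ as $k\to -\infty$ and $\|P_k W_\infty + 1\|_{L^2(\Omega_1)}\to 0$ as $k\to +\infty$ built into the definition of $\Gamma$.

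The main obstacle I anticipate is the dominated convergence step in the first part, because it rests simultaneously on the finite energy of $W_0$, the positive lower bound $A_0>0$ (which upgrades $J_0(W_0)<\infty$ into $V(W_0)\in L^1(\Omega)$), and the boundedness of $A$; once these ingredients are assembled the computation is routine. The second inequality is essentially algebraic, modulo the geometric remark that a heteroclinic connection in $\Gamma$ cannot be a.e.\ constant, which is where the $W^{1,2}_{loc}$-regularity is crucially used.
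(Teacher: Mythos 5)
Your proposal follows the same route as the paper: both parts are proved by plugging the minimizers $W_0$ and $W_\infty$ (whose existence and bounds $-1\le W_0, W_\infty\le 1$ come from the periodic/constant case of Section 2) into the other functional. The paper's own proof is extremely terse --- it simply asserts $\lim_{\epsilon\to0}J_\epsilon(W_0)=J_0(W_0)$ without justification and compresses the second part into the chain $\Theta_0\le J_0(W_\infty)<J_\infty(W_\infty)=\Theta_\infty$ --- so the genuine value added by your write-up is the explicit dominated convergence argument and, more importantly, the observation that the strict inequality $J_0(W_\infty)<J_\infty(W_\infty)$ requires $\int_\Omega V(W_\infty)\,dx\,dy>0$, which you justify correctly: $V(W_\infty)\equiv0$ a.e.\ would force $W_\infty\in\{-1,1\}$ a.e., hence constant by $W^{1,2}_{loc}$ regularity and connectedness of $\Omega$, contradicting the two asymptotic conditions in the definition of $\Gamma$. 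One small remark: the boundedness of $A$ on $\overline{\Omega}$, which you invoke for the integrable envelope, is not literally stated in $(A_3)$ (which only controls $\liminf_{|(x,y)|\to\infty}A$), but it is an implicit standing assumption of the paper --- without it $J_\epsilon$ need not even converge to $J_0$ on $\Gamma$ --- so your use of it is consistent with the paper's intent.
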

	
\begin{proof} For each $\epsilon >0$, 
$$
\Theta_{\epsilon} \leq J_\epsilon(W_0).
$$
Since 
$$
\lim_{\epsilon \to 0}J_\epsilon(W_0)=J_0(W_0)=\Theta_0,
$$ 	
it follows that
$$
\limsup_{\epsilon \to 0}\Theta_{\epsilon} \leq \Theta_0.
$$
On the other hand, by $(A_3)$, 
$$
\Theta_0 \leq J_0(W_0)< J_\infty(W_\infty)=\Theta_\infty,
$$
which shows the lemma. 
\end{proof}

In the sequel, we fix $\epsilon_0>0$ small enough a such way that 
\begin{equation} \label{E14}
\Theta_\epsilon < \Theta_\infty, \quad \forall \epsilon \in (0,\epsilon_0).
\end{equation}

\subsection{Proof of Theorem \ref{T22}}

Arguing as in Section 2, for each $\epsilon >0$ there is a minimizing sequence $(U_n) \subset \Gamma$ with $-1 \leq U_n(x,y) \leq 1$ for all $(x,y) \in \Omega$ and $U \in W^{1,2}_{loc}(\Omega)$ such that 
$$
J_\epsilon(U_n) \to \Theta_\epsilon,
$$
$$
U_n \rightharpoonup U \quad \mbox{in} \quad  E_k, \quad \forall k \in \mathbb{Z},
$$
$$
U_n \to U \quad \mbox{in} \quad L^{2}((k,k+1) \times \D), \quad \forall k \in \mathbb{Z},
$$
$$
U_n(x,y) \to U(x,y), \quad \mbox{a.e. in} \quad \Omega,
$$
$$
-1 \leq U(x,y) \leq 1, \quad \forall (x,y) \in \Omega, 
$$
and
\begin{equation} \label{E15}
J_\epsilon(U) \leq \Theta_\epsilon.
\end{equation}

In the sequel, we will use the same approach explored in Section 3. As $(U_n) \subset \Gamma$ and $\tau \in (0, \sqrt{|\Omega_1|})$, for each  $m \in \mathbb{N}$ there are $k_1(m), k_2(m) \in \mathbb{Z}$ such that 
\begin{equation} \label{EE0}
	\|P_{-j}Q_m - 1\|_{L^{2}(\Omega_1)} < \tau, \quad \|Q_m - 1\|_{L^{2}(\Omega_1)} \geq \tau
\end{equation}
and
\begin{equation} \label{EE00}
	\|P_{j}R_m + 1\|_{L^{2}(\Omega_1)} < \tau , \quad \|R_m + 1\|_{L^{2}(\Omega_1)} \geq \tau 
\end{equation}
for all $j \in \mathbb{N}$, where
$$
Q_m(x,y)=U_{m}(x-k_1(m),y) \quad \mbox{and} \quad R_m(x,y)=U_{m}(x+k_2(m),y), \quad \forall (x,y) \in \Omega.
$$

\begin{prop} \label{crucial1} If $\epsilon \in (0, \epsilon_0)$, then there is $j_0 \in \mathbb{N}$ such that 
	\begin{equation} \label{NEWLIMIT}
	\|U-1\|_{L^{2}((-j,-j+1) \times \D)}\leq \tau \quad \mbox{and} \quad  \|U+1\|_{L^{2}((j,j+1) \times \D)}\leq \tau, \quad \forall j \geq j_0.
\end{equation}	
\end{prop}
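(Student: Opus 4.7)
The strategy is to repeat the scheme of Proposition \ref{crucial*} with the Rabinowitz data $(A_3)$ in place of $(A_1)$--$(A_2)$, the constant $A_\infty$ replacing the periodic function $A_p$, and the separation $\Theta_\epsilon < \Theta_\infty$ from (\ref{E14}) replacing $\Theta^* < \Theta_p^*$. As before, one splits into the four cases (I)--(IV) on the signs of $k_1(m), k_2(m)$. Case (I) is immediate: passing to the $L^2$ limit in the inequalities (\ref{EE0})--(\ref{EE00}) via (\ref{EQT2*}) gives the conclusion with $j_0 = 0$. The remaining cases reduce by symmetry to the boundedness of $k_1(m)$ (or $k_2(m)$), and I focus on the representative subcase of (II), assuming by contradiction that $k_1(m) \to +\infty$.

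Set $Q_m(x,y) = U_m(x - k_1(m), y)$. Standard compactness gives $W \in W_{loc}^{1,2}(\Omega)$ with $-1 \leq W \leq 1$, along which $Q_m$ converges weakly in every $E_k$ and strongly in $L^2((k,k+1)\times \D)$. The key analytical input is the analogue of (\ref{E3}), namely $J_\infty(W) \leq \Theta_\epsilon$. After the change of variable $u = x + k_1(m)$ one has $J_\epsilon(U_m) = \sum_k \widetilde{I}_{\epsilon,k}(Q_m)$ with translated coefficient $A(\epsilon(x-k_1(m)),y)$; for fixed $(x,y)$, the points $\epsilon(x-k_1(m))$ leave every compact set, so $(A_3)$ forces $\liminf_{m} A(\epsilon(x-k_1(m)),y) \geq A_\infty$. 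Term-by-term lower semicontinuity (Fatou for nonnegative integrands) then gives $J_\infty(W) \leq \liminf_m J_\epsilon(U_m) = \Theta_\epsilon$.

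Next I would establish $W \in \Gamma$; with this in hand, the definition of $\Theta_\infty$ and (\ref{E14}) give $\Theta_\infty \leq J_\infty(W) \leq \Theta_\epsilon < \Theta_\infty$, the desired contradiction. Summability of $J_\infty(W)$ yields $I_{\infty,\pm j}(W) \to 0$ as $j \to \infty$. Arguing as in Section 3, $P_{-j} W$ converges along a subsequence in $L^2(\Omega_1)$ to some $W_0$ with $\int_{\Omega_1} A_\infty V(W_0) = 0$, so $W_0 \in \{-1, 1\}$; passing to the limit in $\|P_{-j}Q_m - 1\|_{L^2(\Omega_1)} < \tau$ gives $\|P_{-j}W-1\|_{L^2(\Omega_1)} \leq \tau < \sqrt{|\Omega_1|}$, forcing $W_0 = 1$. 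Likewise $P_j W \to \widehat{W}_0 \in \{-1,1\}$, and the delicate point is ruling out $\widehat{W}_0 = 1$ by a gluing construction: define $Z_{j,m}$ equal to $1$ on $x\leq j$, to $(j+1-x)+(x-j)Q_m$ on $x\in(j,j+1]$, and to $Q_m$ on $x \geq j+1$; then $Z_{j,m} \in \Gamma$ yields $J_\infty(Z_{j,m}) \geq \Theta_\infty$.

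The main obstacle, and the point where the argument departs from the periodic case, is that we no longer have a pointwise inequality $A \leq A_\infty$, so $A(\epsilon x, y) - A_\infty$ has no definite sign and the analogue of (\ref{E6B}) is not immediate. The fix uses the one-sided control from $(A_3)$: for any $\delta > 0$ there exists $R>0$ with $A(\epsilon x, y) \geq A_\infty - \delta$ whenever $|(\epsilon x, y)| > R$, and since $k_1(m) \to +\infty$ the effective tail $\sum_{k \geq j+1-k_1(m)} I_{\infty, k}(U_m)$ sits in this region once $m$ is large, absorbing the defect into an arbitrarily small error. Combined with the transition lower bound $\beta>0$ from \cite[Proposition 2.14]{Byeon}, the small-energy estimates (\ref{NOVAESTIMATIVA})--(\ref{NOVAESTIMATIVA2}) adapted with $A_\infty$ in place of $A_p$ (using $P_j Q_m \to 1$ in $L^2(\Omega_1)$), and a choice of $j \geq j_0$ and $m \geq m_0$ making $I_{\infty,j}(Z_{j,m}) < \beta/2$, this yields $J_\infty(Z_{j,m}) < \Theta_\infty$, contradicting $Z_{j,m} \in \Gamma$. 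Hence $\widehat{W}_0 = -1$, $W \in \Gamma$, and one reaches the contradiction displayed above; cases (III) and (IV) follow by symmetric arguments.
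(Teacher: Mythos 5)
Your proposal is correct and follows essentially the same route as the paper's proof: reduce to the four sign cases, handle (I) by passing to the limit, argue by contradiction in (II)--(IV) by extracting $W = \lim Q_m$, establish $J_\infty(W)\leq\Theta_\epsilon$ via Fatou and the pointwise $\liminf$ bound $\liminf_m A(\epsilon(x-k_1(m)),y)\geq A_\infty$, identify the asymptotic states $W_0=1$ and $\widehat{W}_0\in\{-1,1\}$, rule out $\widehat{W}_0=1$ by the cut-and-paste competitor $Z_{j,m}\in\Gamma$ against $\Theta_\infty$, and contradict $\Theta_\epsilon<\Theta_\infty$ from (\ref{E14}). The paper's own proof of Proposition \ref{crucial1} is deliberately terse --- it just says to ``argue as in Proposition \ref{crucial*}'' --- and the one place you add genuine value is in spelling out why the analogue of (\ref{E6B}) survives without the two-sided decay of $(A_1)$: the estimate needs only the one-sided control $A(\epsilon x,y)\geq A_\infty-\delta$ for $|(\epsilon x,y)|$ large, which is exactly what $\liminf_{|(x,y)|\to\infty}A = A_\infty$ in $(A_3)$ provides. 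That observation is implicit in the paper and worth making explicit. One small slip of notation: the small-energy estimates (\ref{NOVAESTIMATIVA})--(\ref{NOVAESTIMATIVA2}) are driven by $W_j=P_jW\to 1$ in $W^{1,2}(\Omega_1)$ (the limit profile, not $P_jQ_m$); this does not affect the argument.
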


\begin{proof}  As in the proof of Proposition \ref{crucial*}, we must study the cases $(I)-(IV)$. The case $(I)$ follows of the same way, however for the other cases we need to do some modifications. Next, we will consider the case $(II)$. As in the last section, we begin by showing the claim below 

\begin{claim} \label{NOVACLAIM2}
	$(k_1(m))$ is bounded.
\end{claim}

In what follows we denote $k_1(m)$ by $k(m)$. Assume by contradiction that there is a $\epsilon \in (0, \epsilon_0)$ such that $(k(m))$ is unbounded and $k(m) \to +\infty$. The boundedness of $(U_m)$ in $E_k$ implies that $(Q_m)$ is also bounded in $E_k$ for all $k \in \mathbb{N}$. Thus, for some subsequence, there is $W \in W_{loc}^{1,2}(\Omega)$ such that
	\begin{equation} \label{E16}
	Q_m \rightharpoonup W \quad \mbox{in} \quad E_k, \quad \forall k \in \mathbb{N},
	\end{equation}
	\begin{equation} \label{E17}
	Q_m(x,y) \to W(x,y), \quad \mbox{a.e. in} \quad \Omega,
	\end{equation}
	and
	\begin{equation} \label{E18}
	-1 \leq  W(x,y) \leq 1, \quad \mbox{a.e. in} \quad \Omega.
	\end{equation}

By a simple change variable,
\begin{equation} \label{thetaEp}
	\sum_{k \in \mathbb{Z}} \tilde{I}_k (W_m)\leq J_\epsilon(U_m)=\Theta_{\epsilon}+o_m(1) 
\end{equation}
	where 
	$$
	\tilde{I}_{\epsilon,k}(U)=\int_{k}^{k+1} \int_{\D} \tilde{\L}_{\epsilon,m}(U) dx dy
	$$
	with
	$$
	\tilde{\L}_{\epsilon,m}(u)=\frac{1}{2}|\nabla u|^2 + A(\epsilon x-\epsilon k(m), y)V(u).
	$$
	Now, the Fatou's Lemma combined with (\ref{thetaEp}) leads to
	\begin{equation} \label{E191}
	J_{\infty}(W) \leq \Theta_{\epsilon}.
	\end{equation}	
Then,   
\begin{equation} \label{EWW191}
	I_{\infty,j}(W) \to 0 \quad \mbox{as} \quad j \to +\infty.
\end{equation}
Setting for each $j \in \mathbb{N}$ the function $\widetilde{W}_j=P_{-j}W$, the fact that  $W \in L^{\infty}(\Omega)$ implies that there are $W_0 \in W^{1,2}_{loc}({\Omega}, \mathbb{R})$  and a subsequence of $(\widetilde{W}_j)$, still denoted by itself,  such that
	$$
\widetilde{W}_j \to W_0 \,\, \mbox{in} \,\, W^{1,2}({\Omega_1}) \quad \mbox{as} \quad j \to +\infty,
	$$
	and so,
	\begin{equation} \label{E21}
	\|\widetilde{W}_j - W_0\|_{L^{2}(\Omega_1)} \to 0.
	\end{equation}
This limit combined (\ref{EE0}) yields 
$$
\|W_0 - 1\|_{L^{2}(\Omega_1)} \leq \tau.
$$
On the other hand, by (\ref{EWW191}),
$$
	I_{\infty,0}(W_0)=0, 
$$
which gives $W_0=1$ or $W_0=-1$. 
As $ \tau \in (0,\sqrt{|\Omega_1|})$, we must have	$W_0=1$. Then, 
	\begin{equation} \label{E22}
	\|W_j - 1\|_{L^{2}(\Omega_1)} \to 0 \quad \mbox{as} \quad j \to +\infty.
	\end{equation}

	By using the same type of argument, fixing $W_j=P_jW$ for $j \in \mathbb{N}$, it is possible to prove that there exist $\widehat{W}_0 \in W_{loc}^{1,2}(\overline{\Omega})$  and a subsequence of $(W_j)$, still denoted by itself,  such that
	$$
{W}_j \to \widehat{W}_0 \,\, \mbox{in} \,\, W^{1,2}({\Omega_1}) \quad \mbox{as} \quad j \to -\infty ,
	$$
	and so,
	\begin{equation} \label{E23}
	\|W_j - \widehat{W}_0\|_{L^{2}(\Omega_1)} \to 0.
	\end{equation}
Thereby, $\widehat{W}_0=1$ or $\widehat{W}_0=-1$. Here, as in the previous section, we have that $\widehat{W}_0=-1$. Indeed, assuming by contradiction that $\widehat{W}_0=1$, we set the  function
	$$
	H_j(x,y)=
	\left\{
	\begin{array}{ll}
	1, & \quad x \leq j, y \in \D, \\\
	((j+1)-x)+(x-j)Q_m(x,y), & \quad j < x \leq j+1, y \in \D, \\
	Q_m(x,y), & \quad j+1 < x, y \in \D.
	\end{array}
	\right.
	$$
	Arguing as Proposition \ref{crucial*}, we will find 
	$$
	\Theta_{\infty} \leq \Theta_\epsilon,
	$$
which contradicts (\ref{E14}), and then $\widehat{W}_0=-1$. Now we follow the same idea explored in Proposition \ref{crucial*}  to conclude the proof.
\end{proof}

Now, we are ready to prove Theorem \ref{T22}. \\

\noindent {\bf Proof of Theorem \ref{T22} } \, As an immediate consequence of the last proposition, for each $\epsilon \in (0, \epsilon_0)$, there is $j_0 \in \mathbb{N}$ such that
$$
\|U-1\|_{L^{2}((-j,-j+1) \times \D)}<\tau \quad \mbox{and} \quad  \|U+1\|_{L^{2}((j,j+1) \times \D)}<\tau, \quad \forall j \geq j_0.
$$
Now, arguing as in the proof of Theorem \ref{T1}, it follows that $U \in C^{2}(\overline{\Omega}, \mathbb{R})$. Moreover, $U$ is a classical solution of  
$$	
-\Delta U + A(\epsilon x,y)V'(U)=0, \quad \mbox{in} \quad \Omega \quad  \mbox{and} \quad \frac{\partial U}{\partial \nu}=0, \ x \in \R, \ y \in \partial \D
$$
with 
$$
U(x,y) \to 1 \quad \mbox{as} \quad x \to -\infty \quad \mbox{and} \quad  U(x,y) \to -1 \quad \mbox{as} \quad x \to +\infty, \quad \mbox{uniformly in} \quad y \in \D.
$$
From this, $U$ is a heteroclinic solution from 1 to -1, which finishes the proof of Theorem \ref{T22}. $\Box$
\vspace{0.5 cm}

\noindent {\bf Acknowledgment.}  The author would like to warmly thank Prof. Olimpio Hiroshi Miyagaki for several discussions about this subject, and also to Professor Rabinowitz by his comments that were very important to improve this manuscript. Moreover, the author would like to thank to the referee for his/her very nice remarks and suggestions, which were very important to improve this manuscript.

\end{document}